\definecolor{light-gray}{gray}{0.6}
\tikzstyle{propagator}=[decorate,decoration={snake,amplitude=0.8mm}]
\tikzstyle{smallpropagator}=[decorate,decoration={snake,segment length=3mm,amplitude=0.5mm}]
\tikzstyle{firstdash}=[dashed,line cap=round, dash pattern=on 2pt off 1pt]
\tikzstyle{seconddash}=[dashed,line cap=round, dash pattern=on 0.5pt off 1pt]
\newcommand{\R}{\mathbb{R}}
\newcommand{\Gr}{\mathbb{G}_{\R, +}}
\def\ba #1\ea{\begin{align} #1 \end{align}}
\def\bas #1\eas{\begin{align*} #1 \end{align*}}
\def\bml #1\eml{\begin{multline} #1 \end{multline}}
\def\bmls #1\emls{\begin{multline*} #1 \end{multline*}}
\newcommand{\cI}{\mathcal{I}}
\newcommand{\II}{\mathcal{I}}
\newcommand{\cL}{\mathcal{L}}
\newcommand{\OO}{\mathcal{O}}
\newtheorem{thm}{Theorem}[section]
\newtheorem{lem}[thm]{Lemma}
\newtheorem{alg}{Algorithm}
\theoremstyle{remark}
\newtheorem{eg}[thm]{Example}
\theoremstyle{definition}
\begin{document}

\title{An algorithm to construct the Le diagram associated to a Grassmann necklace}
\author{S. Agarwala and S. Fryer}

\begin{abstract}
Le diagrams and Grassmann necklaces both index the collection of positroids in the nonnegative Grassmannian $Gr_{\geq 0}(k,n)$, but they excel at very different tasks: for example, the dimension of a positroid is easily extracted from its Le diagram, while the list of bases of a positroid is far more easily obtained from its Grassmann necklace. Explicit bijections between the two are therefore desirable. An algorithm for turning a Le diagram into a Grassmann necklace already exists; in this note we give the reverse algorithm.
\end{abstract}

\maketitle

\section{Introduction}\label{s:intro}

An element of the real Grassmannian $Gr(k,n)$ is called totally nonnegative if it has a matrix representation in which all of its maximal minors are nonnegative. This induces a cell decomposition of the nonnegative Grassmannian $Gr_{\geq 0}(k,n)$ into {\em positroids}, determined by which maximal minors are positive and which are zero. The positroids are indexed by several combinatorially interesting collections of objects, many of which first appeared in Postnikov's foundational preprint \cite{Postnikov}. In this note we focus only on two of these (Le diagrams and Grassmann necklaces, both defined in Section \ref{s:notation} below), and refer the interested reader to \cite{Postnikov} for the bigger picture.

Positroids have recently found applications in several areas of physics, notably the study of shallow water waves \cite{MR3279534} and calculating scattering amplitudes in the quantum field theory SYM $N=4$ \cite{agarwala,Amplituhedron}. Indeed, the two main current techniques used in calculating scattering amplitudes in SYM $N=4$ are BCFW recursion and Wilson loop diagrams, both of which make use of the combinatorial machinery of positroids \cite{KWZ}. In particular, one finds that the Le diagrams are crucial for understanding the geometry underlying the Wilson loop diagrams \cite{2:6paper}. In a forthcoming paper, we study the combinatorics of the positroids associated to Wilson loop diagrams, and show that the natural objects to use in this setting are the Grassmann necklaces.

Positroids also have a surprising application in noncommutative algebra, where they are closely related to the study of graded prime ideals in the quantized coordinate rings $\OO_q(M_{m,n})$ and $\OO_q(Gr(k,n))$ \cite{GLL2,MR2403308}. These ideals are indexed by {\em Cauchon diagrams} \cite{Cauchon}, which are equivalent to Le diagrams. However, obtaining a generating set for a given prime is often easier when phrased in terms of the Grassmann necklace. (This follows from \cite{MR3285618,GLL2} for $\OO_q(M_{m,n})$, and is conjectured for $\OO_q(Gr(k,n))$.)

The algorithm to construct a Le diagram from a Grassmann necklace was a result of the aforementioned forthcoming work, and we present it separately in this short note due to its usefulness for the variety of applications described above.

In Section \ref{s:notation} we set out the notation and conventions in use, and in Section \ref{s:oh alg} we give Oh's algorithm for constructing a Grassmann necklace from a Le diagram. Finally in Section \ref{s:reverse alg} we describe the inverse algorithm, and prove that it does indeed reverse the algorithm given in Section \ref{s:oh alg}.

{\bf Acknowledgements}: The authors gratefully acknowledge Karel Casteels for introducing them to Grassmann necklaces and for several helpful conversations on the subject.

\section{Notation and definitions}\label{s:notation}

Following the standard convention, we write $[n]$ for the set of integers $\{1,2,\dots, n\}$.

Given a Young diagram fitting inside a $k \times (n-k)$ box, we assign numbers to its rows and columns by arranging the numbers $[n]$ along its southeast border, starting at the north east corner. All coordinates are given in terms of these row and column labels. Notice that if we keep the same diagram but increase $n$, this changes the size of the bounding box and hence changes the row and column labels. (See Figure \ref{fig:row column numbering} for examples.) %Note that the first coordinate in the ordered pair gives a vertical label.

\begin{figure}[h!]
\[\begin{tikzpicture}[baseline=(current bounding box.east),scale=0.75]
\draw (1,0) grid (2,3);
\draw (2,1) grid (4,3);
\draw (4,2) grid (5,3);
\draw[dotted] (2,0) -- (5,0) -- (5,2);

\node at (5.15,2.5) {\footnotesize$1$};
\node at (4.5,1.85) {\footnotesize$2$};
\node at (4.15,1.5) {\footnotesize$3$};
\node at (3.5,0.85) {\footnotesize$4$};
\node at (2.5,0.85) {\footnotesize$5$};
\node at (2.15,0.5) {\footnotesize$6$};
\node at (1.5,-0.15) {\footnotesize$7$};
%\node at (0.5,-0.15) {\footnotesize$8$};
\end{tikzpicture}
\qquad \qquad
\begin{tikzpicture}[baseline=(current bounding box.east),scale=0.75]
\draw (1,0) grid (2,3);
\draw (2,1) grid (4,3);
\draw (4,2) grid (5,3);
\draw[dotted] (2,0) -- (6,0) -- (6,3) -- (5,3);

\node at (5.5,2.85) {\footnotesize$1$};
\node at (5.15,2.5) {\footnotesize$2$};
\node at (4.5,1.85) {\footnotesize$3$};
\node at (4.15,1.5) {\footnotesize$4$};
\node at (3.5,0.85) {\footnotesize$5$};
\node at (2.5,0.85) {\footnotesize$6$};
\node at (2.15,0.5) {\footnotesize$7$};
\node at (1.5,-0.15) {\footnotesize$8$};
%\node at (0.5,-0.15) {\footnotesize$9$};
\end{tikzpicture}
\]
\caption{Row and column numbering for a Young diagram with $k = 3$, $n = 7$ (left) and $k = 3$, $n = 8$ (right). The top left box in each diagram has coordinates $(1,7)$ (left diagram), $(2,8)$ (right diagram).}
\label{fig:row column numbering}
\end{figure}
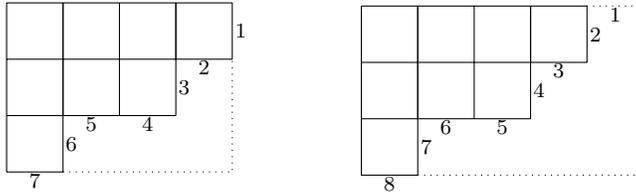

A {\bf Le diagram} of type $(k,n)$ is a Young diagram fitting inside a $k \times (n-k)$ box, together with a filling of the squares with $+$ and $0$ symbols subject to the following rule:
\begin{itemize}
\item Consider $i,j,k,l \in [n]$ with $i < k$ and $j<l$. If the boxes $(i, j)$ and $(k,l)$ both contain a $+$, then box $(k,j)$ (if it exists) must also contain a $+$.
\end{itemize}

A {\bf Grassmann necklace} of type $(k,n)$ is an ordered sequence of sets
\[\cI = (I_1, I_2, \dots, I_n),\]
with $I_i \subset [n]$ and $|I_i| = k$ for each $i$, related by the following rule:
\begin{itemize}
\item If $i \in I_i$, then $I_{i+1} = (I_i \backslash i) \cup j$ for some $j \in [n] \setminus I_i$.
\item If $i \not \in I_i$, then $I_{i+1} = I_i$.
\end{itemize}
(Note: all indices are taken mod $n$. In particular, if $i = n$ then $I_{i+1} = I_1$.)

Both of these definitions were originally stated by Postnikov in \cite[Sections 6 and 16]{Postnikov} The thread tying the definitions of Le diagrams and Grassmann necklaces together is given by the following theorem:

\begin{thm}\label{postnikovthm} \cite[Theorems 6.5 and 17.1]{Postnikov}
The positroid cells in $Gr(k,n)_{\geq 0}$ are in 1-1 correspondence with the Le diagrams of type $(k,n)$, and are also in 1-1 correspondence with the Grassmann necklaces of type $(k,n)$.
\end{thm}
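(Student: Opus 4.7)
The plan is to establish two separate bijections with positroid cells rather than attempting a direct bijection between Le diagrams and Grassmann necklaces. Both sides are routed through intermediate combinatorial gadgets (plabic graphs in one case, decorated permutations in the other), and the proof ultimately rests on a matching enumeration.

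First I would handle the correspondence between Le diagrams and positroid cells via Postnikov's boundary measurement map. Given a Le diagram $L$ of type $(k,n)$, place a vertex in each $+$ square and attach weighted horizontal and vertical "wires" extending to the boundary; assembling the resulting boundary measurement yields a $k \times n$ matrix whose maximal minors, viewed as polynomials in the edge weights, land in $Gr_{\geq 0}(k,n)$. The image of this construction as the weights range over $\R_{>0}^{(\text{number of }+\text{'s})}$ parametrizes a cell. For injectivity, I would argue that both the shape of $L$ and the position of each $+$ and $0$ can be recovered from the collection of vanishing Plücker coordinates on a generic point of the cell; the Le condition on the filling is exactly what enforces that the parametrization is one-to-one.

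Second, for the Grassmann necklace correspondence, define the forward map $\cM \mapsto \cI$ by letting $I_i$ be the lexicographically minimal basis of $\cM$ with respect to the cyclically rotated order $i < i+1 < \cdots < i-1$. That $\cI$ satisfies the necklace axioms follows from a direct application of the matroid basis exchange axiom: if $i \in I_i$ then $I_{i+1}$ is forced to drop $i$ and add the smallest possible replacement, and if $i \notin I_i$ then $i \notin I_{i+1}$ either, so $I_{i+1} = I_i$. To invert this map, send a necklace $\cI$ to the decorated permutation $\pi$ defined by $\pi(i) = j$ whenever $I_{i+1} = (I_i \setminus \{i\}) \cup \{j\}$, with fixed points decorated according to whether $i \in I_i$, and then invoke Postnikov's separate bijection between decorated permutations of $[n]$ with $k$ anti-exceedances and positroids in $Gr(k,n)_{\geq 0}$.

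The main obstacle in both halves is surjectivity: one must argue that every combinatorially valid Le diagram (respectively Grassmann necklace) actually arises from some positroid cell. The cleanest way around this is a counting argument. Postnikov shows that Le diagrams of type $(k,n)$, Grassmann necklaces of type $(k,n)$, and decorated permutations on $[n]$ with $k$ anti-exceedances are all enumerated by the same generating function (essentially, the $q$-analogue of $\binom{n}{k}$ evaluated appropriately). Once the forward maps are shown to be well-defined and injective, equality of cardinalities upgrades them to bijections for free, which is the route I expect to be cleanest and most in keeping with Postnikov's own treatment.
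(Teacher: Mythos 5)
You should first be aware that the paper does not prove this statement at all: Theorem \ref{postnikovthm} is quoted purely as background, with the proof deferred to Postnikov (Theorems 6.5 and 17.1 of \cite{Postnikov}). So there is no in-paper argument to compare yours against; what you have written is a sketch of Postnikov's own proofs, and it has to be judged on those terms.

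Judged on those terms, the outline of your two forward maps is the standard one (boundary measurements of the $\Gamma$-network attached to a Le diagram; lexicographically minimal bases in the cyclically shifted orders for the necklace), but the final counting step is a genuine gap. You reduce surjectivity of both correspondences to the claim that positroid cells, Le diagrams of type $(k,n)$, Grassmann necklaces of type $(k,n)$, and decorated permutations with $k$ anti-exceedances are equinumerous. The last three can indeed be counted against each other by explicit combinatorial bijections, but the number of positroid cells is not known independently of the very correspondences you are trying to prove: in Postnikov's development, the enumeration of cells is itself a consequence of the parametrization of cells by Le diagrams (equivalently $\Gamma$-networks). Using ``equal cardinalities'' to upgrade your injections to bijections is therefore circular. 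The actual proofs establish surjectivity directly: for Theorem 6.5 one shows that every point of $Gr_{\geq 0}(k,n)$ lies in the image of the boundary measurement map of exactly one Le-network with positive real weights, so every cell is hit and hit once; for Theorem 17.1 one shows that the necklace of a cell determines, and is determined by, the cell's decorated permutation, and that every necklace arises this way. Relatedly, your proposed inverse for the necklace map is assembled from a different bijection (decorated permutations $\leftrightarrow$ positroids) without the needed compatibility check: you must verify that the decorated permutation read off from the necklace of a given cell coincides with the decorated permutation Postnikov attaches to that same cell. Without either that compatibility or a genuinely independent count of the cells, injectivity plus counting does not close the argument.
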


\section{Oh's algorithm for obtaining a Grassmann necklace from a Le diagram}\label{s:oh alg}

In this section we recall an algorithm for obtaining a Grassmann necklace from a Le diagram, due to Oh \cite{MR2834184}.

Given two squares $(i,j)$ and $(k,l)$ in a Young diagram, we say that
\begin{itemize}
\item $(i,j)$ is {\bf strictly northwest} of $(k,l)$ if $i < k$ and $j > l$.
\item $(i,j)$ is {\bf weakly northwest} of $(k,l)$ if $i \leq k$ and $j \geq l$.
\end{itemize}

Given any square $(k,l)$ in a Le diagram $L$, it follows from the Le property that:
\begin{itemize}
\item There is either a unique nearest $+$ square strictly northwest of $(k,l)$, or all squares strictly northwest of $(k,l)$ contain a $0$.
\item There is either a unique nearest $+$ square weakly northwest of $(k,l)$, or all squares weakly northwest of $(k,l)$ contain a $0$.
\end{itemize}
See \cite{CasteelsFryer,MR2834184} for more details. Note that if $(k,l)$ contains a $+$, then the unique $+$ square weakly northwest of it is $(k,l)$ itself.

%Given any square $(k,l)$ in a Le diagram $L$, it follows directly from the Le property that there is either a {\em unique} nearest $+$ square strictly northwest of $(k,l)$, or all squares stricktly northwest of $(k,l)$ contain a $0$ \cite{CasteelsFryer,MR2834184}. Note that if there are two boxes $(i,j)$ and $(i', j')$ (with $(i < i')$ and $(j< j')$) that are both strictly northwest and equidistant from $(k,l)$, this violates the Le condition as the square $(i', j)$, which is both strictly northwest of $(k,l)$ and closer, doesn't contain a $+$.

%{\color{red} Given any square $(k,l)$ in a Le diagram $L$, it follows directly from the Le property that there is either a {\em unique} nearest $+$ square weakly northwest of $(k,l)$, or all squares weakly northwest of $(k,l)$ contain a $0$ \cite{CasteelsFryer,MR2834184}. Note that if a certain box in the Le diagram, call it $(k,l)$, has a $0$ in it, it cannot have 2 boxex $(i,j)$ and $(i', j')$ (with $(i < i')$, $(j \geq l)$, $(j< j')$, and $(i' leq k)$) that are both weakly northwest and equidistant from $(k,l)$. This violates the Le condition as the square $(i', j)$, which is both weakly northwest of $(k,l)$ and closer, doesn't contain a $+$. By the same logic, if $(k,l)$ contains a plus, there is, in fact, an unique square strongly northwest of it. The desired unique square weakly northwest of $(k,l)$ is itself.}

We can now state Oh's algorithm:

\begin{alg}\label{alg:oh} Let $L$ be a Le diagram of type $(k,n)$.
\begin{enumerate}
\item Label the {\em squares} along the southeast boundary of $L$ with $\overline{2}, \overline{3}, \dots, \overline{n}$, starting from the northeast corner.
\item Define $I_1 = \{\text{row labels of $L$}\}$.
\item To obtain $I_i$, $2\leq i \leq n$:
\begin{itemize}
\item Start in the nearest $+$ square weakly northwest of $\overline{i}$ (this could be $\overline{i}$ itself).
\item Step to the unique nearest $+$ square strongly northwest of the current position.
\item Continue until no more steps are possible, keeping track of the squares stepped in.
\item Set $I_i = \big(I_1 \backslash \{\text{rows involved in this path}\}\big) \cup \{\text{columns involved in this path}\}$.
\end{itemize}
\end{enumerate}
We write $\cI(L)$ for the Grassmann necklace of a Le diagram $L$.
\end{alg}
Notice that if there are {\em no} $+$ squares weakly northwest of square $\overline{i}$, then the path from $\overline{i}$ is empty and $I_i = I_1$.

\begin{eg}
In $Gr_{\geq0}(3,8)$, consider the Le diagram
% \[L\ =\ \Large
% \begin{tikzpicture}[baseline=(current bounding box.east),scale=0.75]
% \draw (0,0) grid (2,3);
% \draw (2,1) grid (4,3);
% \draw (4,2) grid (5,3);
% \node[gray,opacity=0.5] at (4.5,2.5) {\huge $\mathbf{\overline{2}}$};
% \node[gray,opacity=0.5] at (3.5,2.5) {\huge $\mathbf{\overline{3}}$};
% \node[gray,opacity=0.5] at (3.5,1.5) {\huge $\mathbf{\overline{4}}$};
% \node[gray,opacity=0.5] at (2.5,1.5) {\huge $\mathbf{\overline{5}}$};
% \node[gray,opacity=0.5] at (1.5,1.5) {\huge $\mathbf{\overline{6}}$};
% \node[gray,opacity=0.5] at (1.5,0.5) {\huge $\mathbf{\overline{7}}$};
% \node[gray,opacity=0.5] at (0.5,0.5) {\huge $\mathbf{\overline{8}}$};

% \node at (0.5,0.5) {$0$};
% \node at (0.5,1.5) {$+$};
% \node at (0.5,2.5) {$+$};
% \node at (1.5,0.5) {$+$};
% \node at (1.5,1.5) {$+$};
% \node at (1.5,2.5) {$+$};
% \node at (2.5,1.5) {$0$};
% \node at (2.5,2.5) {$0$};
% \node at (3.5,1.5) {$+$};
% \node at (3.5,2.5) {$0$};
% \node at (4.5,2.5) {$+$};

% \node at (5.15,2.5) {\footnotesize$1$};
% \node at (4.5,1.85) {\footnotesize$2$};
% \node at (4.15,1.5) {\footnotesize$3$};
% \node at (3.5,0.85) {\footnotesize$4$};
% \node at (2.5,0.85) {\footnotesize$5$};
% \node at (2.15,0.5) {\footnotesize$6$};
% \node at (1.5,-0.15) {\footnotesize$7$};
% \node at (0.5,-0.15) {\footnotesize$8$};

% \end{tikzpicture},
% \]
\[L\ =\ \Large
\begin{tikzpicture}[baseline=(current bounding box.east),scale=0.75]
\draw (0,0) grid (2,3);
\draw (2,1) grid (4,3);
\draw (4,2) grid (5,3);
\node[gray,anchor=south west] at (4.5,2.4) {\small $\mathbf{\overline{2}}$};
\node[gray,anchor=south west] at (3.5,2.4) {\small $\mathbf{\overline{3}}$};
\node[gray,anchor=south west] at (3.5,1.4) {\small $\mathbf{\overline{4}}$};
\node[gray,anchor=south west] at (2.5,1.4) {\small $\mathbf{\overline{5}}$};
\node[gray,anchor=south west] at (1.5,1.4) {\small $\mathbf{\overline{6}}$};
\node[gray,anchor=south west] at (1.5,0.4) {\small $\mathbf{\overline{7}}$};
\node[gray,anchor=south west] at (0.5,0.4) {\small $\mathbf{\overline{8}}$};

\node at (0.5,0.5) {$0$};
\node at (0.5,1.5) {$+$};
\node at (0.5,2.5) {$+$};
\node at (1.5,0.5) {$+$};
\node at (1.5,1.5) {$+$};
\node at (1.5,2.5) {$+$};
\node at (2.5,1.5) {$0$};
\node at (2.5,2.5) {$0$};
\node at (3.5,1.5) {$+$};
\node at (3.5,2.5) {$0$};
\node at (4.5,2.5) {$+$};

\node at (5.15,2.5) {\footnotesize$1$};
\node at (4.5,1.85) {\footnotesize$2$};
\node at (4.15,1.5) {\footnotesize$3$};
\node at (3.5,0.85) {\footnotesize$4$};
\node at (2.5,0.85) {\footnotesize$5$};
\node at (2.15,0.5) {\footnotesize$6$};
\node at (1.5,-0.15) {\footnotesize$7$};
\node at (0.5,-0.15) {\footnotesize$8$};

\end{tikzpicture},
\]
where we have already labelled the boundary squares as described in Algorithm \ref{alg:oh}.

To compute $I_4$, for example: the sequence of steps from the square labelled $\overline{4}$ is $(3,4), (1,7)$, and so
\[I_4 = \big(\{1,3,6\}\backslash \{1,3\}\big) \cup \{4,7\} = 467.\]
The complete Grassmann necklace of this diagram is
\[\II(L) = (136,236,367,467,678,678,178,168).\]

\end{eg}

\section{Reversing Oh's algorithm}\label{s:reverse alg}

Oh's algorithm for converting a Le diagram into a Grassmann necklace is an extremely useful tool for computing concrete examples, and for translating between different subject areas (e.g. from noncommutative algebra to combinatorics, as in \cite{CasteelsFryer}). However, it is also useful to be able to translate a Grassmann necklace into a Le diagram (e.g. to quickly compute the dimension of the associated positroid), but no algorithm exists in the literature to perform this process easily.

One can of course slowly construct the Le diagram from the northeast corner in order to ensure it produces the right Grassmann necklace, but for large examples this can be slow and prone to error. Instead we present the following streamlined version of this process:
\begin{alg}\label{alg:susama}
Let $\cI$ be a Grassmann necklace of type $(k,n)$.
\begin{enumerate}
\item In a $k\times (n-k)$ box, draw the Young diagram whose rows are labelled by $I_1$.
\item For each $i$, $2 \leq i \leq n$:
\begin{itemize}
\item Write
\[I_1 \backslash I_i = \{a_1 > a_ 2 > \dots > a_r\}, \quad I_i \backslash I_1 = \{b_1 < b_2 < \dots < b_r\}.\]
\item Place a $+$ label in each of the squares
\[(a_1,b_1), (a_2,b_2), \dots, (a_r,b_r).\]
\end{itemize}
\item After performing step (2) for all $i$, place a $0$ in any remaining unfilled boxes.
\end{enumerate}
Write $\cL(\cI)$ for the diagram obtained from a Grassmann necklace $\cI$ via this process.
\end{alg}

\begin{eg}
Consider the Grassmann necklace

\begin{equation}\label{eq:GN for example}\cI = (1247, 2347, 3478,4678,5678,4678,1478,1478).\end{equation}

This necklace has 8 terms, each containing 4 entries, so it is of type $(4,8)$. To construct $\cL(\cI)$, we start by drawing the Young diagram with rows labelled by $I_1$ inside a $4 \times (8-4)$ box:

\[\begin{tikzpicture}[baseline=(current bounding box.east),scale=0.75]
\draw[gray,dotted] (0,0) grid (4,4);
\draw (0,0) grid (1,4) (1,1) grid (3,4) (3,2) grid (4,4);
\node at (4.25,3.5) {\footnotesize $1$};
\node at (4.25,2.5) {\footnotesize $2$};
\node at (3.6,1.75) {\footnotesize $3$};
\node at (3.25,1.4) {\footnotesize $4$};
\node at (2.5,0.75) {\footnotesize $5$};
\node at (1.6,0.75) {\footnotesize $6$};
\node at (1.25,0.4) {\footnotesize $7$};
\node at (0.5,-0.25) {\footnotesize $8$};
\end{tikzpicture}
\]

The squares which should contain a $+$ label are specified by Algorithm \ref{alg:susama}:

\[\begin{array}{c|c|c|c}
\ \ i \ \  & I_1 \backslash I_i & I_i \backslash I_1 & + \text{ squares } \\ \hline
2 & 1 & 3  & (1,3) \\
3 & 2, 1  & 3, 8  & (2,3), (1,8)  \\
4 & 2, 1 & 6, 8  & (2,6), (1,8) \\
5 & \ \ 4, 2, 1 \ \  & \ \ 5, 6, 8 \ \  & (4,5), (2,6), (1,8)  \\
6 & 2, 1  & 6, 8  & (2,6), (1,8)  \\
7 & 2  & 8  & (2,8)  \\
8 & 2  & 8  & (2,8)  \\
\end{array}\]
\vspace{0.5em}

Placing $+$ labels in these squares and $0$ labels in all remaining squares, we obtain the Le diagram
\[\cL(\cI)\   = \  \Large
\begin{tikzpicture}[baseline=(current bounding box.east),scale=0.75]
\draw (0,0) grid (1,4) (1,1) grid (3,4) (3,2) grid (4,4);
\node at (4.25,3.5) {\footnotesize $1$};
\node at (4.25,2.5) {\footnotesize $2$};
\node at (3.6,1.75) {\footnotesize $3$};
\node at (3.25,1.4) {\footnotesize $4$};
\node at (2.5,0.75) {\footnotesize $5$};
\node at (1.6,0.75) {\footnotesize $6$};
\node at (1.25,0.4) {\footnotesize $7$};
\node at (0.5,-0.25) {\footnotesize $8$};

\node at (0.5,0.5) {$0$};
\node at (0.5,1.5) {$0$};
\node at (0.5,2.5) {$+$};
\node at (0.5,3.5) {$+$};
\node at (1.5,1.5) {$0$};
\node at (1.5,2.5) {$+$};
\node at (1.5,3.5) {$0$};
\node at (2.5,1.5) {$+$};
\node at (2.5,2.5) {$0$};
\node at (2.5,3.5) {$0$};
\node at (3.5,2.5) {$+$};
\node at (3.5,3.5) {$+$};

\end{tikzpicture}\]
The reader is invited to verify that applying Algorithm \ref{alg:oh} to $\cL(\cI)$ above yields exactly the Grassmann necklace in \eqref{eq:GN for example}.
\end{eg}

To prove that Algorithm \ref{alg:susama} is indeed the reverse of Algorithm \ref{alg:oh}, we show that the lists of squares constructed in step 2 of Algorithm \ref{alg:susama} are precisely the squares appearing in the paths defined by Algorithm \ref{alg:oh} (Lemma \ref{res:enough pluses}), and that every $+$ square in a Le diagram must appear in one of these paths (Lemma \ref{res:every plus appears in a GN}).

\begin{lem}\label{res:enough pluses}
Given a Le diagram $L$, every $+$ square in $\cL(\cI(L))$ is also a $+$ square in $L$.
\end{lem}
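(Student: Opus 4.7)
The plan is to compare what Oh's algorithm and Algorithm \ref{alg:susama} produce at each fixed step $i \in \{2, \ldots, n\}$, and to show that the $+$ squares deposited by Algorithm \ref{alg:susama} at step $i$ are precisely the $+$ squares visited by Oh's path starting at $\overline{i}$. Since the latter are $+$ squares in $L$ by construction, the lemma will follow immediately.

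To execute this, I would fix $i$ and let $(r_1, c_1), \ldots, (r_s, c_s)$ denote the successive $+$ squares on Oh's path from $\overline{i}$. Each step is strongly northwest, which forces the strict monotonicities
\[ r_1 > r_2 > \cdots > r_s \quad \text{and} \quad c_1 < c_2 < \cdots < c_s. \]
Every $r_j$ is a row label of $L$ and hence lies in $I_1$, while every $c_j$ is a column label and therefore lies in $[n] \setminus I_1$. Oh's defining formula then reads
\[ I_i \;=\; \bigl(I_1 \setminus \{r_1, \ldots, r_s\}\bigr) \cup \{c_1, \ldots, c_s\}, \]
from which I can read off $I_1 \setminus I_i = \{r_1 > \cdots > r_s\}$ and $I_i \setminus I_1 = \{c_1 < \cdots < c_s\}$.

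Next I match this against Algorithm \ref{alg:susama}: at step $i$ it writes $I_1 \setminus I_i = \{a_1 > \cdots > a_s\}$ and $I_i \setminus I_1 = \{b_1 < \cdots < b_s\}$, and places a $+$ at each square $(a_j, b_j)$. By the monotonicity above I can identify $a_j = r_j$ and $b_j = c_j$, so $(a_j, b_j)$ is exactly the $j$-th cell of Oh's path and thus a $+$ in $L$. Since every $+$ placed by Algorithm \ref{alg:susama} in forming $\cL(\cI(L))$ arises this way for some $i$ and $j$, every such $+$ is already a $+$ in $L$.

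The only point that could reasonably be called an obstacle is verifying that Algorithm \ref{alg:susama}'s pairing of the sorted set differences really reproduces the cell-by-cell order of Oh's path. This, however, is immediate from the observation that a strongly-northwest path has strictly decreasing row indices and strictly increasing column indices, matching Algorithm \ref{alg:susama}'s sorting conventions exactly. Everything else in the argument is a direct unwinding of the two algorithms' definitions.
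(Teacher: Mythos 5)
Your proof is correct and follows essentially the same route as the paper: you identify the rows and columns of Oh's path from $\overline{i}$ with $I_1 \setminus I_i$ and $I_i \setminus I_1$, use the strictly-northwest stepping to match the sorted pairing $(a_j,b_j)$ of Algorithm \ref{alg:susama} with the path's cells, and conclude each such cell is a $+$ in $L$. Your explicit verification that the sorting conventions reproduce the path order is exactly the (implicit) content of the paper's remarks that the rows are ordered maximum-to-minimum and the columns minimum-to-maximum.
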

\begin{proof}
Fix $i \in [n] \backslash \{1\}$, and recall that the $i$th term in the Grassmann necklace $\cI(L)$ is defined by
\[I_i = \big(I_1 \backslash \{ \text{rows involved in the path from } \overline{i}\}\big) \cup \{ \text{columns involved in the path from }\overline{i} \},\]
where ``the path from $\overline{i}$'' refers to the sequence of steps defined by Algorithm \ref{alg:oh} and starting from square $\overline{i}$ in $L$.

Once we have $L$ and $\cI(L)$, we can describe the path from $\overline{i}$ in terms of $I_1$ and $I_i$ as follows. The rows involved in the path from $\overline{i}$ are:
\begin{align*}
\{\text{rows in the path from } \overline{i}\} & = \{ \text{all row labels}\} \cap \{\text{rows in path from }\overline{i} \}\\
& = I_1 \backslash \{\text{rows not in path from }\overline{i} \} \\
& = I_1 \backslash (I_1 \cap I_i) \\
& = I_1 \backslash I_i.
\end{align*}
Since the path is moving strictly north at each step, the row labels in $I_1 \backslash I_i$ should be ordered from maximum to minimum. Similarly, the columns involved in the path from square $\overline{i}$ are given by
\begin{align*}
\{\text{columns in the path from $\overline{i}$}\} & = \{ \text{all column labels}\} \cap \{ \text{columns in path from }\overline{i} \} \\
& = \{I_1^c\} \cap \{I_1^c\cap I_i \}\\
& = I_i \backslash I_1.
\end{align*}
The path is moving strictly west at each step, so the column labels in $I_i \backslash I_1$ should be ordered from minimum to maximum.

Thus the squares which acquire a $+$ label when constructing $\cL(\cI(L))$ are exactly those which are involved in at least one path in $L$, each of which must be a $+$ square in $L$ by the definition of Algorithm \ref{alg:oh}.
\end{proof}

It is not immediately clear from Algorithm \ref{alg:oh} that {\em every} $+$ square in $L$ contributes to at least one term in the Grassmann necklace, i.e. $\cL(\cI(L))$ may contain a strict subset of the $+$ squares in $L$. The next lemma verifies that this is not the case.

\begin{lem}\label{res:every plus appears in a GN}
Given a Le diagram $L$, every $+$ square in $L$ is also a $+$ square in $\cL(\cI(L))$.
\end{lem}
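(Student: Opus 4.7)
The plan is to induct on $N(r,c) := \#\{(r',c')\in L : r'>r,\ c'<c,\ (r',c')\text{ contains a }+\}$, the number of $+$ squares strictly southeast of $(r,c)$ in $L$. I call a $+$ square $(r,c)$ \emph{started} if some boundary square $\overline{i}$ weakly southeast of $(r,c)$ has $(r,c)$ as its nearest $+$ square weakly northwest, and \emph{reached} if some $+$ square $(r_1,c_1)$ strictly southeast of $(r,c)$ has $(r,c)$ as its nearest $+$ square strictly northwest. The overarching claim I would establish is that every $+$ square in $L$ is either started or reached.

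Given this dichotomy, the induction is immediate. If $(r,c)$ is reached via $(r_1,c_1)$, then any $+$ strictly southeast of $(r_1,c_1)$ is also strictly southeast of $(r,c)$, while $(r_1,c_1)$ itself is strictly southeast of $(r,c)$ but not of itself; so $N(r_1,c_1) < N(r,c)$. The inductive hypothesis places $(r_1,c_1)$ on the path from some $\overline{i}$, and by the construction of Algorithm~\ref{alg:oh} the next step of this path from $(r_1,c_1)$ is the nearest $+$ strictly NW of $(r_1,c_1)$, namely $(r,c)$; so $(r,c)$ lies on the same path. If $(r,c)$ is started, it is directly the first entry of the path from the witnessing $\overline{i}$.

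For the dichotomy itself, I would first record that every box $(r,c)$ in a Young diagram labelled as in Section~\ref{s:notation} satisfies $r < c$ (the boundary-edge labels for the columns of row $r$ all exceed the row-edge label $r$), so the interval $[r+1,c]$ is nonempty and yields a nonempty family of boundary squares $\overline{i}$ weakly southeast of $(r,c)$ from which to choose. The base case $N(r,c)=0$ is the easier half: $(r,c)$ is vacuously not reached, so I need only exhibit $\overline{i}$ making it started, which I would do by taking $i$ as large as possible so that $\overline{i}$ stays in row $r$ with column $\le c$; the absence of any $+$ strictly southeast of $(r,c)$ rules out all blockers in the weak-NW region of $\overline{i}$. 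The main obstacle is the remaining case of the inductive step, where $(r,c)$ has $+$ squares strictly SE of it but is not reached: here I would show that the not-reached hypothesis, combined with the Le property's uniqueness of the nearest $+$, forces any potential blocker for $(r,c)$ to either be a $+$ strictly SE of $(r,c)$ with $(r,c)$ as its own nearest strict NW (contradicting not-reached) or to lie in the same row or same column as $(r,c)$ in a way that can be sidestepped by adjusting the choice of $i$ (e.g.\ pushing $\overline{i}$'s column strictly east of any eastward in-row competitor, or its row strictly north of any southward in-column competitor).
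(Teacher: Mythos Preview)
Your high-level structure is essentially the paper's argument recast as forward induction: the paper supposes some $+_a$ lies on no path, produces a $+_b$ strictly southeast of $+_a$ that also lies on no path, and iterates to a contradiction. That infinite descent is exactly the contrapositive of your induction on $N(r,c)$ combined with the started/reached dichotomy, so the two strategies are equivalent once the dichotomy is established.

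The genuine gap is in your argument for the dichotomy itself. Two concrete failures:
\begin{enumerate}
\item Your base-case choice of $\overline{i}$ ``in row $r$'' need not exist with the right column. Take $k=2$, $n=4$, the full $2\times 2$ diagram, with $+$'s exactly at $(1,3),(1,4)$. Then $N(1,4)=0$, but the only boundary square in row~$1$ is $\overline{2}=(1,3)$, whose nearest weak-NW $+$ is $(1,3)$, not $(1,4)$. The boundary square that does start $(1,4)$ is $\overline{4}=(2,4)$, which lies in \emph{column} $4$, not row $1$.
\item Your claimed disjunction for blockers is false: a $+$ strictly SE of $(r,c)$ need not have $(r,c)$ as its nearest strict-NW $+$, so its presence does not contradict ``not reached''. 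Take $k=3$, $n=6$, the full $3\times 3$ diagram, with $+$'s exactly at $(1,5),(1,6),(3,4)$. Then $(1,6)$ is not reached (the only strictly-SE $+$ is $(3,4)$, whose nearest strict-NW $+$ is $(1,5)$), yet $(3,4)$ is a blocker for $\overline{4}=(3,4)$ that is neither in row $1$ nor column $6$ nor has $(1,6)$ as its nearest strict-NW $+$.
\end{enumerate}

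The paper avoids these difficulties by arguing in the other direction: assuming $+_a$ is \emph{not started}, it first deduces a nearest $+$ due east in the same row and a nearest $+$ due south in the same column (by considering the boundary squares at the east end of row $r$ and the bottom of column $c$), and then uses the Le property on the resulting rectangle to locate a $+_b$ strictly SE from which every path is forced through $+_a$. Your ``adjust the choice of $i$'' would need an argument of comparable care---specifically, a proof that at least one of the row-$r$ or column-$c$ boundary squares has no same-row/same-column blocker---and that is precisely the content supplied by the paper's east/south analysis.
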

\begin{proof}
Suppose for contradiction that there is a $+$ square (which we will call $+_a$) in $L$ which appears in none of the paths from Algorithm \ref{alg:oh}. We claim that in this case, there must exist another $+$ square in $L$ which is strictly southeast of $+_a$ and also does not appear in any of the paths from Algorithm \ref{alg:oh}.

First note that $+_a$ cannot be the nearest $+$ square weakly northwest of any boundary square, and in particular it cannot be a boundary square itself; this is because all such squares appear in step one of Algorithm \ref{alg:oh}. This guarantees that there is at least one $+$ square to the east of $+_a$ (in the same row), and at least one $+$ square to the south of $+_a$ (in the same column).

Thus we have the following type of configuration in $L$ (bearing in mind that either or both of the blocks of 0s could be empty):

\begin{equation}\label{eq:Le proof 1}
\begin{tikzpicture}[baseline=(current bounding box.east),scale = 0.5]
\draw (0,0) rectangle (1,1);
\node at (0.5,0.5) {\large $+$};

\draw (0,1) rectangle (1,3);
\node at (0.5,2) {$0$};

\draw (0,3) rectangle (1,4);
\node at (0.5,3.5) {\large $+_a$};

\draw (1,3) rectangle (4,4);
\node at (2.5,3.5) {$0$};

\draw (4,3) rectangle (5,4);
\node at (4.5,3.5) {\large $+$};

%\draw (-2,-2) -- (-2,6) -- (7,6);
%\draw[dashed,dash pattern=on 2pt off 2pt](-2,-1) -- (3,-1);
%\draw[dashed,dash pattern=on 1pt off 3pt](3,-1) -- (3,0);

%\draw[dashed,dash pattern=on 2pt off 2pt] (6,6) -- (6,2);
%\draw[dashed,dash pattern=on 1pt off 3pt](6,2) -- (5,2);

\draw[gray,dotted] (4,3) -- (4,1) (5,3) -- (5,1);
\draw[gray,dotted] (1,1) -- (4,1) (1,0) -- (4,0);

\draw[gray] (4,0) rectangle (5,1);
\node[gray] at (4.5,0.5) {$*$};

\draw (1,4) -- (1,6) (4,4) -- (4,6);
\node at (2.5, 5) {\Small all 0s $\uparrow$};

\draw (0,1) -- (-2,1) (0,3) -- (-2,3);
\node at (-1,2) { $\substack{\leftarrow \\ \text{all 0s}}$};

\end{tikzpicture}
\end{equation}

By the Le condition, if the square labelled $*$ is inside the boundary of $L$ then it must contain a $+$, and we are done. If not, we have the following type of configuration (the dashed line indicates the diagram boundary):

\begin{equation}\label{eq:Le proof 2}
\begin{tikzpicture}[baseline=(current bounding box.east),scale = 0.5]

\draw[gray, fill = gray, opacity = 0.5] (1,1) rectangle (4,3);

\draw (0,0) rectangle (1,1);
\node at (0.5,0.5) {\large $+$};

\draw (0,1) rectangle (1,3);
\node at (0.5,2) {$0$};

\draw (0,3) rectangle (1,4);
\node at (0.5,3.5) {\large $+_a$};

\draw (1,3) rectangle (4,4);
\node at (2.5,3.5) {$0$};

\draw (4,3) rectangle (5,4);
\node at (4.5,3.5) {\large $+$};

%\draw[dashed,dash pattern=on 2pt off 2pt] (-2,-1) -- (4,-1) -- (4,1);
%\draw[dashed,dash pattern=on 2pt off 2pt] (6,6) -- (6,1)-- (4,1);

\draw[dashed,gray] (4,1) -- ++(1,0) -- ++(0,1) -- ++(1,0) -- ++(0,1);
\draw[dashed,gray] (4,1)  -- ++(0,-1) -- ++(-1,0) -- ++(0,-1)-- ++(-1,0);

\draw (1,4) -- (1,6) (4,4) -- (4,6);
\node at (2.5, 5) {\Small all 0s $\uparrow$};

\draw (0,1) -- (-2,1) (0,3) -- (-2,3);
\node at (-1,2) { $\substack{\leftarrow \\ \text{all 0s}}$};

\end{tikzpicture}
\end{equation}

Note that the shaded region in \eqref{eq:Le proof 2} {\em must} have positive width and height, because otherwise $+_a$ is the nearest $+$ square weakly northwest of a boundary square. By the same argument, the shaded region in \eqref{eq:Le proof 2} must contain at least one $+$ square.

%In this case, at least one square in the shaded region must be a $+$ square, because otherwise $+_a$ is the nearest $+$ square weakly northwest of one or more boundary squares. The shaded region in \eqref{eq:Le proof 2} {\em must} have positive width and height (unlike in \eqref{eq:Le proof 1} above), since we have assumed that $+_a$ cannot be a boundary square.

In either case, we have demonstrated the existence of a square $+_b$ strictly southeast of $+_a$, and with the pattern of 0 squares indicated in diagrams \eqref{eq:Le proof 1} or \eqref{eq:Le proof 2}. This pattern of 0s is forced by the Le condition, and it guarantees that any path which steps in $+_b$ must also step in $+_a$ (possibly with some intermediate steps). Therefore $+_b$ does not appear in any of the paths from Algorithm \ref{alg:oh} either. This argument can be repeated indefinitely, a contradiction to the fact that the diagram is finite.

It follows that every $+$ square in $L$ contributes to at least one term in the Grassmann necklace $\cI(L)$, and hence by Lemma \ref{res:enough pluses} is also a $+$ square in $\cL(\cI(L))$.
\end{proof}

Thus we arrive at the main conclusion of the paper.

\begin{thm}
Algorithm \ref{alg:susama} uniquely constructs the Le diagram associated to a Grassmann necklace.
\end{thm}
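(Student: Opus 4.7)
The plan is to leverage the two preceding lemmas together with Postnikov's bijection (Theorem \ref{postnikovthm}) to deduce that Algorithm \ref{alg:susama} is a two-sided inverse of Oh's Algorithm \ref{alg:oh}, from which the theorem follows almost at once. The bulk of the technical work is already done; what remains is a short bookkeeping argument.

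First I would record the consequence of Lemmas \ref{res:enough pluses} and \ref{res:every plus appears in a GN}: for any Le diagram $L$ of type $(k,n)$, the $+$ squares of $L$ and the $+$ squares of $\cL(\cI(L))$ coincide. Since both diagrams share the same underlying Young shape (determined by the row-label set $I_1$, which by step (1) of Algorithm \ref{alg:susama} is identical in $\cL(\cI(L))$ and in $L$), and all boxes not marked $+$ are marked $0$, the two diagrams are identical. This gives the identity $\cL \circ \cI = \mathrm{id}$ on the set of Le diagrams of type $(k,n)$.

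Next I would fix an arbitrary Grassmann necklace $\cI$ of type $(k,n)$. By Theorem \ref{postnikovthm}, there is a unique Le diagram $L$ with $\cI(L) = \cI$. Applying the identity established above gives $\cL(\cI) = \cL(\cI(L)) = L$, so $\cL(\cI)$ is a genuine Le diagram and it satisfies $\cI(\cL(\cI)) = \cI$. Uniqueness is immediate: any Le diagram $L'$ with $\cI(L') = \cI$ must equal $L$ by Theorem \ref{postnikovthm}, and hence equals $\cL(\cI)$.

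The main obstacle was already surmounted in the two lemmas, so the proof of the theorem itself is essentially a one-line deduction. The one subtle point I would want to flag explicitly is that \emph{a priori} the output of Algorithm \ref{alg:susama} is only a Young diagram filled with $+$ and $0$ symbols, with no direct verification that the resulting filling satisfies the Le condition. The indirect route through Postnikov's bijection resolves this: since $\cL(\cI)$ coincides with the honest Le diagram $L$, the Le condition holds automatically, and no separate combinatorial check is required.
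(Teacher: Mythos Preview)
Your proof is correct and follows essentially the same approach as the paper: both deduce the theorem from Lemmas \ref{res:enough pluses} and \ref{res:every plus appears in a GN} together with the fact that Oh's algorithm gives a bijection from Le diagrams to Grassmann necklaces. Your version is simply more explicit about the bookkeeping, and your remark that the Le condition on $\cL(\cI)$ is verified indirectly (via the identity $\cL(\cI)=L$) is a nice point the paper leaves implicit.
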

\begin{proof}
This follows from Lemmas \ref{res:enough pluses} and Lemma \ref{res:every plus appears in a GN}, and the fact that Algorithm \ref{alg:oh} uniquely constructs the Grassmann necklace associated to a Le diagram.
\end{proof}

%\bibliography{/Users/sianfryer/Dropbox/bibliography}
%\bibliographystyle{plain}

\end{document}